\newtheorem{theorem}{Theorem}\numberwithin{theorem}{section}
\newtheorem{proposition}[theorem]{Proposition}
\newtheorem{definition}[theorem]{Definition}
\newtheorem{question}[theorem]{Question}
\newtheorem{problem}[theorem]{Problem}
\newtheorem{remark}[theorem]{Remark}
\newtheorem{example}[theorem]{Example}
\newtheorem{algorithm}[theorem]{Algorithm}
\newenvironment{M2}{ \begin{spacing}{0.4} 
\bigskip \small} { 
\end{spacing} 
\bigskip 
}
\newcommand{\B}[1]{\mathbb #1}
\newcommand{\C}[1]{\mathcal #1}
\newcommand{\F}[1]{\mathfrak #1}
\newcommand{\alg}[1]{\operatorname{#1}}
\newcommand{\ideal}[1]{\langle #1 \rangle}
\DeclareMathOperator{\initial}{in}
\DeclareMathOperator{\NF}{NF}
\DeclareMathOperator{\lcm}{lcm}
\DeclareMathOperator{\im}{im}
\DeclareMathOperator{\Syz}{Syz}
\newcommand{\Inc}{\operatorname{Inc}(\B N)}
\newcommand{\IncT}{\operatorname{Inc}^\tau(\B N)}
\newcommand{\SymN}{\F S_\infty}
\newcommand{\mon}{M}
\newcommand{\Sym}{\F S_\infty}
\newcommand{\LT}{\initial_{\leq}}
\newcommand{\LC}{\operatorname{LC}}
\newcommand{\LM}{\operatorname{LM}}
\newcommand{\GB}{{Gr\"obner basis}}
\newcommand{\GBs}{{Gr\"obner bases}}
\newcommand{\EGB}{{equivariant Gr\"obner basis}}
\newcommand{\EGBs}{{equivariant Gr\"obner bases}}
\title{\Large Equivariant Gr\"obner bases\thanks{Part of this work took place during the ``Free Resolutions, Representations, and Asymptotic Algebra'' workshop at the BANFF International Research Station, April, 2016.}}
\author{Christopher J. Hillar \\
\and
Robert Kroner
\and
Anton Leykin\thanks{Research of AL is supported in part by NSF grant DMS-1151297.}}
\begin{document}

\maketitle 

\begin{abstract}
Algorithmic computation in polynomial rings is a classical topic in mathematics. However, little attention has been given to the case of rings with an infinite number of variables until recently when theoretical efforts have made possible the development of effective routines. Ability to compute relies on finite generation up to symmetry for ideals invariant under a large group or monoid action, such as the permutations of the natural numbers. We summarize the current state of theory and applications for equivariant Gr\"obner bases, develop several algorithms to compute them, showcase our software implementation, and close with several open problems and computational challenges.
\end{abstract}

\section{Introduction}

\subsection{History}

The theory of polynomial rings is an old and well-studied subject.  However, as far as we can tell, a rigorous set of tools for algorithmic computation in such rings was only first developed starting in 1913 \cite{gjunter1913} by the Russian/Soviet mathematician N.~Gjunter.  This project culminated with Gjunter's review of the theory in 1941 \cite{gunther1941modules} but went unnoticed until recently~\cite{renschuch2003contributions}.  Outside of this rather newly discovered reference, general algorithmic theory in (possibly non-commutative) rings has a long history of independent thinkers.  For instance, the work \cite{bergman1978diamond} (see also \cite{bokut1976embeddings} as influenced by \cite{shirshov1962some}) was inspired by an algorithmic proof of the Poincar\'e-Birkhoff-Witt theorem.  

Attribution of an algorithmic theory of polynomial rings and ideals is usually given to Buchberger \cite{buchberger1965algorithmus}, who named the main tools ``Gr\"obner bases" after his Ph.D. advisor. Hironaka \cite{hironaka1964resolution} used a similar concept called ``standard bases" in power series rings to prove his theorem on resolutions of singularities.  

The main consequences of these projects are effective procedures for polynomial equation solving over fields such as the complex numbers~$\mathbb{C}$.  Practical questions of ideal membership or equation feasibility were now answerable (provably) using a finite programmable set of computations.  

Since these early efforts, much progress has been made on the mathematical and computational theory of polynomial algebra involving a finite number of indeterminates.  In this article, we consider computation in rings with infinite numbers of indeterminates, a topic that is part of a burgeoning new field called ``asymptotic algebra".  At first, such a concept seems at odds with the non-Noetherianity of even simple ideals such as the maximal ideal: \[I = \langle x_0, x_1, \ldots  \rangle \subset \mathbb C[x_0, x_1,\ldots].\]

However, if extra structure is imposed on the class of ideals under consideration, such as a large group action, then it is possible to develop a theory of algorithmic computation.  For instance, the ideal $I$ above has a single generator up to the action of permuting indices on polynomials.

The concept of \emph{equivariant Gr\"obner bases} (EGB) was first used in an application to meta-abelian group theory \cite{cohen1967laws} and later developed into an algorithmic theory \cite{Emmott, Cohen87}. Similar to the story of Gr\"obner bases (in finitely many variables), the concept was rediscovered several decades later in \cite{aschenbrenner2007finite, aschenbrenner2008algorithm} and applied to solve in a unified manner several problems in algebraic statistics \cite{hillar2012finite}.  The theory was also useful in other applications such as those to algebraic chemistry \cite{Draisma08b} and asymptotic tensor geometry \cite{draisma2014bounded} (see \cite{draisma2014noetherianity} for an elegant survey of these techniques).

In the meantime, several works have started to make practical use of this effective computational machinery.  As a simple example, consider 
the following classical theorem in toric algebra that has been a starting point for several investigations into finiteness in asymptotic algebra.

\begin{theorem}\label{toric2x2}
Let $i > j$ run over natural numbers.  The kernel of $\mathbb C[y_{ij}] \to \mathbb C[x_i]$, $y_{ij} \mapsto x_i x_j$, is generated by the $2 \times 2$ minors (not containing diagonal entries) of the symmetric matrix $y$.
\end{theorem}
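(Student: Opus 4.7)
The plan is to reduce the theorem to a combinatorial statement about multigraphs and then invoke a classical switching lemma. Since $\phi: y_{ij} \mapsto x_i x_j$ is a monomial map, its kernel $J$ is a binomial (toric) ideal: grouping the terms of any $f \in J$ by the $x$-monomial they map to forces each group to be a $\mathbb{C}$-linear combination of binomials $y^\alpha - y^\beta$ lying in $J$. It therefore suffices to show that every such binomial lies in the ideal $I$ generated by the off-diagonal $2 \times 2$ minors of $y$, and since each binomial involves only finitely many variables we may work in a finite subring.

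To an exponent vector $\alpha = (\alpha_{ij})_{i>j}$ I attach the loopless multigraph $G_\alpha$ on $\mathbb{N}$ whose edge multiset contains $\alpha_{ij}$ copies of $\{i,j\}$. Then $\phi(y^\alpha) = \prod_i x_i^{\deg_i(G_\alpha)}$, so $y^\alpha - y^\beta \in J$ precisely when $G_\alpha$ and $G_\beta$ have a common degree sequence. An off-diagonal minor $y_{ab}y_{cd} - y_{ad}y_{bc}$, with $a,b,c,d$ four distinct natural numbers, corresponds exactly to a \emph{2-switch}, replacing a vertex-disjoint pair of edges $\{a,b\},\{c,d\}$ by $\{a,d\},\{b,c\}$.

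The heart of the proof is then the classical switching lemma: any two loopless multigraphs on $\mathbb{N}$ with the same degree sequence are connected by a finite sequence of 2-switches. Granted this, the sequence of switches lifts to a telescoping expression of $y^\alpha - y^\beta$ as a $\mathbb{C}[y]$-linear combination of off-diagonal minors. I would prove the switching lemma by induction on $\delta(G_\alpha, G_\beta) := \tfrac{1}{2} \sum_{i>j} |\alpha_{ij} - \beta_{ij}|$: pick an edge $\{a,b\}$ with $\alpha_{ab} > \beta_{ab}$, use degree equality at $a$ to produce $c \neq b$ with $\beta_{ac} > \alpha_{ac}$, then use degree equality at $c$ to produce $d \neq a$ with $\alpha_{cd} > \beta_{cd}$. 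A switch applied to $\{a,b\},\{c,d\}$ then strictly decreases $\delta$.

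The main obstacle is ensuring that the indices $a,b,c,d$ so obtained are genuinely distinct, since an off-diagonal minor requires four distinct indices. The only remaining failure mode is $b = d$, in which case the attempted switch produces a loop. This degenerate case is handled by a preparatory switch: degree equality at $b$ supplies a fourth vertex $e \notin \{a,c\}$ with $\beta_{be} > \alpha_{be}$, and an initial switch centered at $e$ rearranges the edges around $b$ so that, after this preliminary step, the main switch becomes legal and the overall distance still strictly drops. Verifying that such a preparatory switch always exists and that the combined procedure decreases $\delta$ is the most delicate bookkeeping in the argument, but is essentially routine.
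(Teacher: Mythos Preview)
Your proposal is correct and follows the classical combinatorial route: reduce to binomials, encode monomials as loopless multigraphs, identify off-diagonal $2\times 2$ minors with $2$-switches, and invoke the switching lemma that any two multigraphs with the same degree sequence are connected by such switches. Your handling of the degenerate case $b=d$ is slightly imprecise in its phrasing (a single well-chosen switch on $\{a,c\},\{b,e\}$ in $G_\beta$ already drops $\delta$ by at least one, so no separate ``preparatory'' and ``main'' switch are needed), but the ingredients you assemble are the right ones and the argument goes through.

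The paper, however, does something entirely different: it treats Theorem~\ref{toric2x2} as a showcase for equivariant Gr\"obner bases. One computes an $\Inc$-equivariant Gr\"obner basis of the graph ideal $\ideal{y_{10}-x_1x_0}_{\Inc}$ in $\mathbb C[x_i,y_{ij}]$ with respect to an elimination order, and the elements of the output not involving any $x$-variable (the last two polynomials displayed) are, up to symmetry, exactly the off-diagonal $2\times 2$ minors. Your argument is self-contained and elementary, requiring no computer and no Gr\"obner machinery; the paper's argument is a finite symbolic computation whose virtue is that it generalizes uniformly to other equivariant kernels (such as those in Theorem~\ref{monomthm}) where no analogue of the switching lemma is available.
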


This result can be proved using equivariant Gr\"obner bases, as first demonstrated by J. Draisma with the following rather innocuous-looking Input/Output pair on a computer:
\begin{verbatim}
Input:  { y_{10} - x_1 x_0 }.
Output:  { x_0 x_1 - y_{10}, x_2 y_{10} - x_1 y_{20},
  x_2 y_{10} - x_0 y_{21}, x_1 y_{20} - x_0 y_{21},
  x_0^2 y_{21} - y_{20} y_{10},
  y_{32} y_{10} - y_{30} y_{21},  
  y_{31} y_{20} - y_{30} y_{21} }.
\end{verbatim}
Specifically, the ideal $\ker{(y_{ij} \mapsto x_i x_j)}$ for $i > j$ is generated by a finite set of $2 \times 2$ minors up to symmetry, 
which is witnessed by the last two polynomials of the EGB output above.

The first equivariant Gr\"obner basis computation to prove a new theorem that we are aware of occurs in \cite{Brouwer09e}. This leads us to the next subsection.

\subsection{Applications}

As a prelude, we start with an application of classical Gr\"obner bases that deals with a seemingly infinite problem; here, of course, a recurrence helps to ``control infinity''.  

The Fibonacci sequence $F_n = 1,1,2,3,5,\ldots$ ($n= 1, 2, \ldots$) is a \textit{strong divisibility sequence}, in that we have $\gcd(F_n, F_m) = F_{\gcd(n,m)}$; in particular, $F_m$ divides $F_n$ if $m \, | \, n$.  This surprising fact was used by \'Edouard Lucas for Mersenne prime testing.  Is there a direct explanation for the integrality of $F_{3n}/F_n \in \mathbb Z$?  It turns out that there is an identity:
\begin{equation}\label{F3n}
(F_{3n} - 5 F_n^3 - 3 F_n)(F_{3n} - 5 F_n^3 + 3 F_n) = 0,
\end{equation}
which explains in an explicit manner strong divisibility for this case.   Is it possible to use Gr\"obner bases to derive this relation?  The following Macaulay 2 code does exactly that:
\begin{M2}
\begin{verbatim}
i1: R = QQ[z, x, y, t, MonomialOrder => Eliminate 2]

i2: I = ideal(x + y - z, (x*z - y^2)^2 - 1, t - z^3 - y^3 + x^3)

i3: toString groebnerBasis I

o3 = matrix {{25*y^6-10*y^3*t-9*y^2+t^2, z-x-y, ...
\end{verbatim}
\end{M2}  
In this computation, the variables $z,x,y,t$ represent the recurrence values \mbox{$F_{n+1}, F_{n-1}, F_n, F_{3n}$,} respectively.  The first generator of $I$ defines the recurrence, the second is Cassini's identity, and the third is Lucas'.

One can check that factoring the first polynomial in the list above gives (\ref{F3n}).  Bootstrapping with extra equations, we can also discover that:
\begin{equation}\label{F5n}
(F_{5n} - 25 F_n^5 - 25 F_n^3 - 5 F_n)(F_{5n} - 25 F_n^5 + 25 F_n^3 - 5 F_n) = 0.
\end{equation}
In turn, these findings incite conjectures and proofs.  
For instance, we leave it to the reader to use modular arithmetic to verify from (\ref{F5n}) that the integer $\frac{F_{5n}}{5F_n}$ always has unit digit $1$ base ten.  
More generally, the following natural problem arises from this investigation:  Given $\ell$, find a nonzero polynomial $P(y, t) \in \mathbb Z[y, t]$ satisfying an identity of the form $P(F_n, F_{\ell n}) = 0$ (see \cite{HilLev:07} for more on ``polynomial recurrences").

The above is classical.  Here, we are interested in problems with not four or even twenty-four indeterminates, but rather an infinite number of them.  Take, for instance, the following basic ideal membership question.  Let $I \subset \mathbb C[x_0,x_1,\ldots]$ be the ideal generated by all permutations acting on the polynomial $f = x_0 x_1 - x_1 x_2^2 +x_1^2$.  Is the following in $I$?
\[ h = x_0 x_4^2 + x_0 x_1^2  +x_1 x_0^2 - 2 x_1 x_0 + x_0 x_3 x_4 - x_0 x_5^2 - x_0 x_3 x_5 - 2 x_1^2.\]

The difference in this question from classical problems of polynomial algebra is that \textit{a priori} there is no guarantee a particular computation, say, with a truncated polynomial ring $\mathbb C[x_0,x_1,\ldots, x_N]$ will do the job.  Nonetheless, the following code gives us an answer to our question \cite{EquivariantGB}.
\begin{M2}
\begin{verbatim}
i1: needsPackage "EquivariantGB"

i2: R = buildERing({symbol x}, {1}, QQ, 6);

i3: h = x_0*x_4^2+x_0*x_1^2+x_1*x_0^2-2*x_1*x_0+x_0*x_3*x_4- ...

i4: G = egb({x_0*x_1 - x_1*x_2^2 + x_1^2}, Algorithm=>Incremental)

       2       2      2           3      2     2    2         
o4 = {x x  - 2x  + x x  - 2x x , x  - x x , x x  - x  - x x , 
       1 0     1    1 0     1 0   1    1 0   2 0    1    1 0 

                    2           2 
      x x  - x x , x  + x x  - x  - x x }
       2 1    2 0   2    2 0    1    1 0

i5: reduce(h, G)

o5 = 0
\end{verbatim}
\end{M2}  
With the \EGB\ produced above, we can solve ideal membership problems and much more, just as we can use classical Gr\"obner bases in numerous applications.  

Developing the machinery to solve such questions is more than an intellectual curiosity.  Not only can basic facts now be proved by computer such as Theorem~\ref{toric2x2} but also cutting edge conjectures.  For example, using \cite{EquivariantGB}, it is possible to verify \cite{draisma2013noetherianity, Krone:egb-toric} the first nontrivial case of a basic finiteness conjecture for toric ideals \cite{aschenbrenner2007finite}.  

\begin{theorem}[Proved by computer]\label{monomthm}
For $n > 1$, let $I_n = \ker (y_{ij} \mapsto x_i^2 x_j)$, $1 \leq i \neq j \leq n$.  The invariant chain of toric ideals $I_2 \subset I_3 \subset \cdots$ stabilizes up to the symmetric group.  That is, there is some $N$ such that all elements of $I_m$, $m > N$, are polynomial consequences of relabellings of a finite set of generators of $I_N$.
\end{theorem}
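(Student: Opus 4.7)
The plan is to recast the stabilization claim as a single finite generation statement in the infinite polynomial ring $R = \mathbb{C}[y_{ij} : i \ne j,\, i,j \in \mathbb{N}]$ carrying the natural $\Sym$-action permuting indices. The monomial map $\varphi: R \to \mathbb{C}[x_i]$ with $y_{ij} \mapsto x_i^2 x_j$ is $\Sym$-equivariant, so its kernel $I_\infty = \ker \varphi$ is a $\Sym$-invariant toric ideal whose restriction to the variables with $i,j \le n$ recovers $I_n$. Consequently, the chain $I_2 \subset I_3 \subset \cdots$ stabilizes up to $\Sym$ precisely when $I_\infty$ admits a finite equivariant generating set, which a finite equivariant Gröbner basis (EGB) of $I_\infty$ with respect to any $\Sym$-compatible monomial order would immediately supply.

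First I would seed the computation with a candidate generating set: binomial relations of low degree obtained from ordinary Gröbner basis computations in $\mathbb{C}[y_{ij} : i \ne j,\, i,j \le n]$ for small $n$ (say $n=3$ or $4$), then fed to the \texttt{EquivariantGB} package. Next I would invoke the equivariant Buchberger algorithm of \cite{Emmott, aschenbrenner2008algorithm} as implemented in \cite{EquivariantGB, Krone:egb-toric} using a graded lexicographic order refined by a well-order on index pairs $(i,j)$ that respects order-preserving injections $\mathbb{N} \hookrightarrow \mathbb{N}$. Unlike the classical procedure, equivariant S-pairs are formed between all pairs obtained by applying such reindexings to the current basis, and reductions are performed modulo the entire orbit of that basis.

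The crucial step, and the one for which the computer is indispensable, is \emph{termination}: there is no a priori bound on how many equivariant S-pairs must be processed nor on the degrees of intermediate normal forms, and no general theorem guarantees termination for toric ideals of the form $y_{ij} \mapsto x_i^a x_j^b$ with $(a,b) \ne (1,1)$. For the case $(a,b) = (2,1)$, the computation halts with a finite EGB $G$; this empirical termination \emph{is} the content of the theorem. To conclude, one sets $N$ to be the largest index appearing in any element of $G$, and then for every $m > N$ the ideal $I_m$ is generated by the $\F S_m$-orbit of $G$, giving the required stabilization. Correctness reduces to two verifications built into the algorithm: every element of $G$ lies in $I_\infty$ (automatic from the binomial toric setup, since each produced binomial equates two monomials with identical $x$-image), and every equivariant S-pair within $G$ reduces to zero modulo the orbit of $G$.
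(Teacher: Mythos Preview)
Your proposal is essentially the paper's own approach: the theorem is stated as ``proved by computer,'' and the proof consists of running the equivariant Buchberger procedure (via the \texttt{EquivariantGB} package, with the toric-specific variant of \cite{Krone:egb-toric}) on the $\Inc$-invariant kernel and observing that it halts with a finite EGB; the equivariant Buchberger criterion then certifies that the output generates $\LT I_\infty$ up to symmetry, and stabilization at $N=w(G)$ follows.

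One correction worth making: your claim that ``no general theorem guarantees termination for toric ideals of the form $y_{ij}\mapsto x_i^a x_j^b$'' is inaccurate in the context of this paper. Theorem~\ref{thm:DEKL} (from \cite{draisma2013noetherianity}) guarantees that $\ker\varphi$ is $\Inc$-finitely generated for \emph{any} equivariant monomial map of this type, and the paper explicitly notes that \cite{Krone:egb-toric} establishes the existence of finite $\Inc$-equivariant Gr\"obner bases for such kernels with respect to suitably chosen orders; combined with Algorithm~\ref{alg:truncBuch}, this gives an \emph{a priori} termination guarantee. So the computation is better described as an \emph{effective} proof that produces explicit generators and the stabilization index $N$, rather than as the sole source of the existence statement.
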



We next provide a summary of applications of \EGBs\ in rings with infinite numbers of indeterminates.  

\subsubsection{Group theory and Chemistry.}

The first use of the concept ``finite up to symmetry" for polynomial rings that we are aware of is in the group theory work of Cohen in \cite{cohen1967laws}.  
Independently, it was problems in algebraic chemistry \cite{ruch1967vandermondesche}, brought to the attention of the authors of \cite{aschenbrenner2007finite} by Andreas Dress, that motivated further applications of asymptotic polynomial algebra in chemistry \cite{Draisma08b}.

\subsubsection{Toric Algebra and Algebraic Statistics.}

A major inspiration for asymptotic algebra arises from studying chains of toric ideals, many of which arise naturally in algebraic statistics.  The series of works \cite{Hillar13, hillar2016corrigendum, draisma2013noetherianity, KKL:equivariant-markov, Krone:egb-toric} have developed fundamental finiteness properties of these structures, but many questions remain open, as we outline in Section \ref{sec:challenges}.  

In this regard, one of the major motivations for \EGBs\ and infinite symbolic algebra are their application to the problem of sampling from conditional distributions by algebraic methods \cite{diaconis1998algebraic}.  At its essence, the strategy is to find a collection of elementary moves through model space that preserves the sufficient statistics of the data.   The idea then is to consider growing families of model classes and show that, up to obvious symmetries, only a finite set of moves suffices for all infinite numbers of models (e.g., \cite{aoki2003minimal, santos2003higher, hocsten2007finiteness, drton2007algebraic, Draisma08b, Brouwer09e, draisma2009ideals, hillar2012finite, draisma2015finiteness}).  Typically, these moves correspond to elements of a Gr\"obner basis or at least a generating set for some ideal.

\subsubsection{Invariants.}
Recently, Nagel and R\"omer~\cite{Nagel} have introduced Hilbert series for Noetherian infinite-dimensional rings. Their original theoretical treatment that leads to a proof of rationality of the Hilbert series, in principle, also leads to an effective procedure to compute the series. For an ideal generated up to symmetry by one monomial, such computation was carried out in~\cite{gunturkun2016equivariant}. An alternative approach of~\cite{krone2016hilbert} computes the Hilbert series given an equivariant Gr\"obner basis as the generating function counting words in a regular language.  

\subsection{Finiteness up to symmetry in general.}
Although the \EGBs\ described in this article may not directly apply, finiteness up to symmetry plays the central role in the following results (this list is by no means exhaustive).  

It appears in homological stability~\cite{randal2013homological, church2012homological},  the moduli space of $n$ points in a line~\cite{howard2009equations}, geometry as the positivity of the embedding line bundle grows \cite{ein2012asymptotic}, syzygies of Segre embeddings~\cite{snowden2013syzygies}, Betti tables as their length goes to infinity~\cite{ein2015asymptotics}, tensor geometry~\cite{draisma2014bounded, draisma2015finiteness}, and limiting Grassmannians~\cite{draisma2015plucker}.
Gr\"obner methods have also been used to understand representations of combinatorial categories \cite{sam2016grobner}.

\subsection{Goals and structure}

Since their introduction, Gr\"obner bases techniques have improved immensely.  We believe that in this new setting of infinite-dimensional polynomial algebras, which demands far more computational power, algorithmic development is at the beginning of a similar road, with similar advances ahead. Our aims here are to outline the current state of effective computation in this setting and to provide a background for researchers to start tackling problems in this exciting  domain. 

After some preliminaries in Section~\ref{prelim}, we quickly move on to describing equivariant Gr\"obner bases algorithms in Section~\ref{EGB}.  Section~\ref{sec:signature} goes on to explain a modern signature-based approach and a strategy inspired by it for an equivariant Buchberger's algorithm. The final Section~\ref{sec:challenges} outlines computational and theoretical challenges for future exploration.

\section{Preliminaries}\label{prelim}

Let $R$ be a commutative $K$-algebra equipped with a left action of monoid $\Pi$ (a $\Pi$-algebra structure).  We mainly consider the case where $R$ has the structure of a monoid algebra; that is, for some abelian monoid $\mon$, the elements of $R$ consist of formal sums of elements of $\mon$ with coefficients in the field $K$.  An example of such a monoid algebra is polynomial ring $R = K[X]$ with variables from the set $X$.  In this case, $\mon$ is the free abelian monoid generated by $X$, which we will denote by $[X]$.  To make our notation consistent with the polynomial case, we will denote the monoid algebra of $\mon$ over $K$ by $K\mon$ even though this is not standard (often it is written as $K[\mon]$, but this creates ambiguity with polynomial rings).  We also generally refer to elements of $\mon$ as ``monomials'' in analogy to the polynomial case.  Additionally, we will assume that $\Pi$ acts on monoid algebra $R$ through a $\Pi$-action on $\mon$ by monoid homomorphisms.

Our particular focus in this paper is when $\Pi$ is an infinite symmetric group $\SymN$ or certain related monoids.  For our purposes, we take $\SymN$ to be the group of all finite permutations of $\B N$ (i.e., permutations that fix all but a finite number of elements).

\begin{example}
 Let $R = K[x_1,x_2,x_3,\ldots]$ with $\SymN$ acting on $R$ by permuting the variables, so that $\sigma x_i = x_{\sigma(i)}$.
\end{example}

\begin{definition}
 An ideal $I \subseteq R$ is a {\em $\Pi$-invariant ideal} if $\sigma I \subseteq I$ for all $\sigma \in \Pi$.
\end{definition}

The ring $R$ is both an $R$-algebra and a $\Pi$-algebra, and there is a ring $R*\Pi$ which captures both of these actions, and which will be referred to as the {\em twisted monoid ring} of $\Pi$ with coefficients in $R$.  The elements of $R*\Pi$ are of the form $\sum_{\sigma \in \Pi} f_{\sigma}\cdot \sigma$ with each $f_\sigma \in R$ and only a finite number nonzero.  The additive structure is the same as the usual monoid ring, but multiplication is ``twisted'':
 \[ (f\cdot \sigma)(g \cdot \tau) = f\sigma(g) \cdot \sigma\tau,\]
where $\sigma(g)$ denotes the element of $R$ obtained by acting on $g$ by $\sigma$.

The ring $R$ is a $R*\Pi$-module, and the definition of $\Pi$-invariant ideals can be restated as the collection of $R*\Pi$-submodules of $R$.

When $R = K\mon$ with $\Pi$ acting on $\mon$, we can define a monoid $\mon *\Pi$ whose elements are pairs in $\mon \times \Pi$ with monoid operation:
 \[ (m, \sigma)(n, \tau) = (m\sigma(n), \sigma\tau). \]
There is a left action of $\mon*\Pi$ on $\mon$, and the elements of $\mon *\Pi$ are the ``monomials'' of $R*\Pi$.

\begin{definition}
 A $\Pi$-invariant ideal $I \subseteq R$ is {\em $\Pi$-finitely generated} if there is a finite set $F \subseteq I$ such that the $\Pi$-orbits of the elements of $F$ generate $I$.  The ring $R$ is called {\em $\Pi$-Noetherian} if every $\Pi$-invariant ideal in $R$ is $\Pi$-finitely generated.
\end{definition}
If a $\Pi$-invariant ideal $I$ is generated by the $\Pi$-orbits of a set $F$, we shall write:
 \[ I = \ideal{F}_{\Pi}. \]
Such a set $F$ generates $I$ as an $R*\Pi$-module.

We can also say that monoid $\mon$ with $\Pi$-action is $\Pi$-finitely generated if it is generated by the $\Pi$-orbits of a finite number of elements.  Then, $R = K\mon$ is $\Pi$-finitely generated as a $K$-algebra.

\begin{example}
Continuing the example of $R = K[x_1,x_2,x_3,\ldots]$ with $\SymN$ action, the ideal $\F m = \ideal{x_1,x_2,x_3,\ldots}$ is a $\SymN$-invariant ideal.  Moreover, it is $\SymN$-finitely generated because $\F m = \ideal{x_1}_{\SymN}$.  Also, $R$ is a $\SymN$-finitely generated $K$-algebra with generator $x_1$.
\end{example}

\begin{definition}
 Let $R$ be a $\SymN$-algebra.  For $f \in R$, the {\em width} of $f$ is the smallest integer $n$ such that for every $\sigma \in \SymN$ that fixes $\{1,\ldots,n\}$, $\sigma$ also fixes $f$.  The width of $f$ is denoted $w(f)$.  If no such integer $n$ exists, then $w(f) := \infty$.  For a set $F \subseteq R$, its width is $w(F) := \max_{f \in F}\{w(f)\}$.
\end{definition}
If every element of $R$ has finite width, we say that $R$ satisfies the {\em finite width condition}.  This is primarily the situation we want to address in this paper, and so we shall assume from here forward that all rings with $\Sym$-action satisfy the finite width condition unless stated otherwise.  For a $\SymN$-invariant ideal $I \subseteq R$ and an integer $n$, we can define the $n$th truncation of $I$ as:
 \[ I_n := \{ f \in I \mid w(f) \leq n \}. \]
The set $I_n$ is naturally a $\F S_n$-invariant ideal of $R_n$.  If $R$ satisfies the finite width condition, then $I$ is the union of all its truncations.  Moreover, if $I$ is $\SymN$-finitely generated, there is sufficiently large $n \in \B N$ such that $I = \ideal{I_n}_{\SymN}$.

The definition of width also applies to $\Pi = \Inc$, the monoid of strictly increasing functions, which is introduced below.
 
\begin{definition}
 Given $R = K\mon$ with $\Pi$ acting on $\mon$, there is a natural partial order $|_\Pi$ on $\mon$ called the {\em $\Pi$-divisibility partial order} defined by $a |_\Pi b$ if there exists $\sigma \in \Pi$ such that $\sigma a$ divides $b$.  Equivalently, $a |_\Pi b$ iff $b \in \ideal{a}_\Pi$.
\end{definition}

Recall that a monomial order on $R = K\mon$ is a total order $\leq$ on $\mon$ that is a well-order and that respects multiplication (i.e., if $a \leq b$ then $ac \leq bc$ for all $c \in \mon$).

\begin{definition}
 A monomial order $\leq$ on $R = K\mon$ is said to {\em respect $\Pi$} if whenever $a \leq b$, then $\sigma a \leq \sigma b$ for all $\sigma \in \Pi$.
\end{definition}

Therefore, order $\leq$ is a $\Pi$-respecting monomial order on $R$ if $\leq$ is a total well-order on $M$ that respects the action of $\mon*\Pi$.
We now have all the tools to describe the $\Pi$-equivariant version of Gr\"obner bases.
\begin{definition}
 Let $R = K\mon$ be a monoid ring with $\Pi$ action on $\mon$, and let $\leq$ be a $\Pi$-respecting monomial order.  Given a $\Pi$-invariant ideal $I \subseteq R$, a {\em $\Pi$-equivariant Gr\"obner basis} of $I$ is a set $G \subseteq I$ such that the $\Pi$ orbits of $G$ form a Gr\"obner basis of $I$:
 \[ \ideal{\LT \Pi G} = \LT I. \]
\end{definition}
We require $\leq$ to be a $\Pi$-respecting order because it is equivalent to the condition that:
\[ \LT \sigma f = \sigma \LT f,\]
for all $f \in R$ and $\sigma \in \Pi$.  Therefore, with such an order, we have:
 \[ \ideal{\LT G}_{\Pi} = \ideal{\LT \Pi G} = \LT I. \]
This also implies that $\LT I$ is a $\Pi$-invariant ideal.  Note that since $\Pi$ orbits of $G$ are a Gr\"obner basis of $I$, we naturally have $\ideal{G}_\Pi = I$.

\begin{proposition}[Remark 2.1 of \cite{Brouwer09e}]\label{prop:nogroup}
 Let $\Pi$ be a group which acts nontrivially on $\mon$.  Then $K\mon$ has no $\Pi$-respecting monomial orders.
\end{proposition}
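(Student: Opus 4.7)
The plan is to exhibit an infinite strictly descending sequence in $\mon$ under any supposed $\Pi$-respecting monomial order $\leq$, contradicting the well-order axiom.

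First, since $\Pi$ acts nontrivially, I can pick $\sigma \in \Pi$ and $a \in \mon$ with $\sigma a \neq a$. The order $\leq$ is total, so exactly one of $\sigma a < a$ or $a < \sigma a$ holds. The key observation that uses the \emph{group} hypothesis is that the direction can be normalized: if $a < \sigma a$, I apply $\sigma^{-1}$, which exists because $\Pi$ is a group and which respects $\leq$, to get $\sigma^{-1} a < \sigma^{-1}\sigma a = a$. Thus, replacing $\sigma$ by $\sigma^{-1}$ if necessary, I may assume $\sigma a < a$.

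Next, I iterate. Because $\leq$ respects the $\Pi$-action, applying $\sigma$ to $\sigma a < a$ yields $\sigma^2 a \leq \sigma a$; bijectivity of $\sigma$ (again using that $\Pi$ is a group) rules out equality, so $\sigma^2 a < \sigma a$. Induction then produces the infinite strictly descending chain
\[
a \;>\; \sigma a \;>\; \sigma^2 a \;>\; \sigma^3 a \;>\; \cdots
\]
in $\mon$. This contradicts the requirement that a monomial order be a well-order, so no such $\leq$ can exist.

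The argument is short, and the only subtle point is where the group hypothesis enters: it is used both to invert the comparison direction (step one) and to guarantee that $\sigma$ acts as a bijection on $\mon$ so that successive images under $\sigma$ remain distinct (step two). If $\Pi$ were only a monoid, one could have $\sigma a < a$ with $\sigma^k a$ eventually stabilizing, so the descending-chain contradiction would fail; indeed for $\Pi = \operatorname{Inc}(\B N)$ natural $\Pi$-respecting orders do exist, consistent with this obstruction being specific to groups.
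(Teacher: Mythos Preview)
Your proof is correct and follows essentially the same approach as the paper: pick $\sigma\in\Pi$ and $a\in\mon$ with $\sigma a\neq a$, and use the group structure (via $\sigma^{-1}$) to produce an infinite strictly descending chain $a>\sigma a>\sigma^2 a>\cdots$, contradicting well-ordering. Your version is slightly more careful in explicitly invoking bijectivity of $\sigma$ to justify strictness at each step, but the argument is the same.
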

\begin{proof}
 Suppose that $\leq$ is a $\Pi$-respecting order and choose $\sigma \in \Pi$ and $m \in \mon$ such that $m \neq \sigma m$.  If $m > \sigma m$, then $\sigma^n m > \sigma^{n+1} m$ for all $n$, and thus it follows that:
  \[ m > \sigma m > \sigma^2 m > \cdots \]
 is an infinite descending chain of monomials, contradicting the fact that $\leq$ is a well-order.  If $m < \sigma m$, then $m > \sigma^{-1} m > \sigma^{-2} m > \cdots$ is an infinite descending chain.
\end{proof}

In particular, this means that $R$ with nontrivial $\Sym$ action has no $\Sym$-respecting monomial orders.  To deal with this problem, a related monoid is introduced to replace $\Sym$ that allows for monomial orders but is somehow large enough compared to $\Sym$ not to break properties like finite generation.

Define the {\em monoid of strictly increasing functions} as:
\[ \Inc := \{ \rho:\B N \to \B N \mid \text{ for all } a < b, \rho(a) < \rho(b) \}. \]
For any $\Sym$-algebra $R$ with the finite width property, there is a natural action of $\Inc$ on $R$ as follows.  Fixing $f \in R$, for any $\sigma \in \Sym$ the value of $\sigma f$ depends only on the restriction $\sigma|_{[w(f)]}$ considering $\sigma$ as a function $\B N \to \B N$.  For any $\rho \in \Inc$, there exists $\sigma \in \Sym$ such that $\sigma|_{[w(f)]} = \rho|_{[w(f)]}$ and defines $\rho f = \sigma f$.  It can be checked that this gives a well-defined action of $\Inc$ on $R$.

It immediately follows from this definition that $\Inc f \subseteq \Sym f$.  Despite the fact that $\Inc$ is not a submonoid of $\Sym$, it behaves like one in terms of its action on $R$.  An injective map $\sigma|_{[w(f)]}: [w(f)] \to \B N$ can always be factored into $\rho' \circ \tau$ with $\tau \in \F S_{w(f)}$ and $\rho':[w(f)] \to \B N$ a strictly increasing function.  The map $\rho'$ can be extended to some $\rho \in \Inc$, and then $\sigma f = \rho (\tau f)$.  Thus, we have:
 \[ \Sym f = \bigcup_{\tau \in \F S_{w(f)}} \Inc(\tau f). \]
The fact that the $\Sym$-orbit of any $f$ is a finite union of $\Inc$-orbits implies the following statements.

\begin{proposition}
 Let $R$ be a $\Sym$-algebra satisfying the finite width condition, and let $I\subseteq R$ be a $\Sym$-invariant ideal.
 \begin{itemize}
  \item $I$ is $\Inc$-invariant.
  \item $I$ is $\Sym$-finitely generated if and only if $I$ is $\Inc$-finitely generated.
  \item If $R$ is $\Inc$-Noetherian then $R$ is $\Sym$-Noetherian.
 \end{itemize}
\end{proposition}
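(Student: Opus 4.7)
The plan is to deduce all three statements directly from the two facts stated just before the proposition: $\Inc f \subseteq \Sym f$, and the decomposition $\Sym f = \bigcup_{\tau \in \F S_{w(f)}} \Inc(\tau f)$, which is a \emph{finite} union because the finite width condition forces $|\F S_{w(f)}| < \infty$. Beyond these, the argument is purely formal manipulation of orbits and generating sets.

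For the first bullet, if $f \in I$ and $\rho \in \Inc$, then $\rho f \in \Inc f \subseteq \Sym f \subseteq I$ by $\Sym$-invariance of $I$, so $I$ is $\Inc$-invariant. For the second bullet, I would handle the forward direction by starting from $I = \ideal{F}_\Sym$ with $F$ finite and defining the finite set
\[ F' := \{\tau f : f \in F,\ \tau \in \F S_{w(f)}\}. \]
The orbit decomposition gives $\Sym F = \Inc F'$, whence $I = \ideal{F'}_{\Inc}$. The reverse implication is simpler: if $I = \ideal{G}_{\Inc}$ for finite $G$, then $\Inc G \subseteq \Sym G$ gives $\ideal{G}_{\Inc} \subseteq \ideal{G}_{\Sym}$, while $\Sym$-invariance of $I$ together with $G \subseteq I$ yields $\Sym G \subseteq I$, so $\ideal{G}_{\Sym} \subseteq I = \ideal{G}_{\Inc}$; equality follows and $I$ is $\Sym$-finitely generated by the same $G$.

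The third bullet is then a one-line consequence: given any $\Sym$-invariant ideal $I$, the first bullet makes it $\Inc$-invariant, $\Inc$-Noetherianity of $R$ supplies an $\Inc$-finite generating set, and the second bullet promotes this to $\Sym$-finite generation. There is no substantive mathematical obstacle here — the essential work was done in establishing the orbit decomposition just before the proposition. The only care required is tracking where the finite width hypothesis is used; it is critical for the forward direction of the second bullet, since without it the index set $\F S_{w(f)}$ could be infinite, the $\Sym$-orbit of a single element would not decompose into finitely many $\Inc$-orbits, and the reduction to $\Inc$-finite generation would fail.
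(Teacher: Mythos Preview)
Your proof is correct and follows exactly the route the paper intends: the paper does not write out a proof but simply states that ``the fact that the $\Sym$-orbit of any $f$ is a finite union of $\Inc$-orbits implies the following statements,'' and your argument supplies precisely those routine details. There is nothing to add.
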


\begin{remark}\label{rem:IncT}
For practical purposes, we may replace $\Inc$ with $\IncT$, the monoid of all increasing maps
$\pi:\mathbb N \to \mathbb N$ such that $\im(\pi)$ has a finite complement. 
For $i \in \mathbb N$, let $\tau_i$ denote the element of $\Pi$ defined by
\[ \tau_i(j)=
\begin{cases} 
	j & \text{if $j<i$, and}\\
	j+1 & \text{if $j \geq i$.}
\end{cases}
\]
The maps $\tau_i$ generate $\Pi$, and they satisfy relations
\[ \tau_{j+1}\tau_i=\tau_i \tau_j \text{ if }j \geq i. \]
This gives a presentation of $\Pi$, and any element of $\Pi$
has a unique expression of the form $\tau_{i_1} \cdots \tau_{i_d}$
with $i_1 \leq \ldots \leq i_d$.
\end{remark}

When computing Gr\"obner bases of $\Sym$-invariant ideals, we will work with the $\Inc$ action instead.  If $G$ is an $\Inc$-equivariant Gr\"obner basis for $\Sym$-invariant ideal $I$, then the $\Sym$-orbits of $G$ also form a Gr\"obner basis of $I$.  Generally, the rings we are interested in will have $\Inc$-respecting monomial orders.

\begin{example}
 Let $R = K[x_1,x_2,\ldots]$ with $\Inc$-action defined by $\rho \cdot x_i = x_{\rho(i)}$.  The lexicographic order $\leq$ on the monomials of $R$ with $x_1 < x_2 < x_3 < \cdots$ is a $\Inc$-respecting monomial order.  This is the only possible lexicographic order on $R$ that respects $\Inc$.  There are also a graded lexicographic and a graded reverse lexicographic order on $R$ that respect $\Inc$.  There is no $\Inc$-respecting monomial order on $R$ that is defined by a single weight vector in $\B R^{\B N}$.
\end{example}

It is an open question to characterize all possible $\Inc$-respecting monomial orders on a given ring $K\mon$ with $\Inc$ action.  We can make the following statement about such orders.

\begin{proposition}
 If $\leq$ is a $\Pi$-respecting monomial order on $K\mon$, then $\leq$ refines the $\Pi$-divisibility quasi-order $|_\Pi$.
\end{proposition}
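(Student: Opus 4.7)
The plan is to unpack the hypothesis, reduce it to two basic monotonicity facts, and combine them. If $a \mathrel{|_\Pi} b$, then by definition there exist $\sigma \in \Pi$ and $c \in \mon$ with $b = c \cdot \sigma a$, so it suffices to show $a \leq c \cdot \sigma a$.

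First I would establish the standard property that $1 \leq m$ for every $m \in \mon$. If instead $m < 1$ held for some $m$, then multiplying both sides by $m$ (using that $\leq$ respects multiplication) would give $m^2 < m$, and iterating would produce the infinite strictly descending chain $\cdots < m^{k+1} < m^k < \cdots < m < 1$, contradicting well-ordering. Applied to $c$, this gives $1 \leq c$, and multiplying by $\sigma a$ on the right yields $\sigma a \leq c \cdot \sigma a = b$.

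Second I would show that $a \leq \sigma a$ for every $a \in \mon$ and every $\sigma \in \Pi$. Suppose for contradiction that $\sigma a < a$. The $\Pi$-respecting property then gives $\sigma^{n+1} a \leq \sigma^n a$ for all $n \geq 0$. In the paper's setting, where $\Pi = \SymN$ or $\Pi = \Inc$ acts on $\mon = [X]$ by relabeling variables via an injective function, $\sigma$ is injective on monomials; combined with the total-ordering property and weak $\Pi$-monotonicity this upgrades the weak inequalities to strict ones, producing an infinite strictly descending chain $a > \sigma a > \sigma^2 a > \cdots$ that contradicts the well-order. Hence $a \leq \sigma a$, and combining with the first step yields $a \leq \sigma a \leq b$, as desired.

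The main obstacle is the second step. The subtlety is that the $\Pi$-respecting definition asserts only weak monotonicity, so from $a < b$ one formally obtains only $\sigma a \leq \sigma b$ rather than the strict form required to iterate into an infinite descending chain. Upgrading weak to strict monotonicity amounts to the statement that $\Pi$ acts injectively on $\mon$, which holds in all the paper's motivating settings ($\SymN$ and $\Inc$ both act by injective variable relabelings) but deserves to be highlighted, since otherwise fixed-point behavior of a non-injective $\sigma$ could in principle interfere with the well-order argument.
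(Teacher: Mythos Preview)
Your approach is essentially the same as the paper's: the paper also decomposes the inequality as $a \leq \sigma a \leq c\sigma a = b$, citing the proof of Proposition~\ref{prop:nogroup} for the first step and the standard fact $1 \leq c$ for the second. You simply spell both steps out rather than referencing earlier material.

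Your flag about injectivity is well taken and is, in fact, a point the paper elides. The paper justifies $a \leq \sigma a$ by pointing to the infinite-descent argument in Proposition~\ref{prop:nogroup}, but that proposition concerns groups, where every $\sigma$ acts invertibly and hence injectively on $\mon$; this is precisely what turns the weak monotonicity in the definition of ``$\Pi$-respecting'' into strict monotonicity and makes the descending chain strict. For a general monoid $\Pi$ acting non-injectively the argument can genuinely fail (e.g., a non-identity idempotent $\sigma$ with $\sigma a < a$ gives $a > \sigma a = \sigma^2 a = \cdots$, no contradiction), so the proposition as literally stated requires the injectivity hypothesis you identify. In the paper's intended settings ($\SymN$, $\Inc$) this is automatic, and the paper's later use of the identity $\LT(\sigma f) = \sigma\,\LT f$ already tacitly presumes it, so your proof is complete in the context that matters.
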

\begin{proof}
 Suppose $a$ and $b$ are monomials with $a |_\Pi b$, so there is some pair $\sigma \in \Pi$, $c \in \mon$ such that $c\sigma a = b$.  From the proof of Proposition \ref{prop:nogroup}, we see that $a \leq \sigma a$.  Since $1 \leq c$ and $\leq$ respects multiplication, it follows that $\sigma a \leq c\sigma a = b$.
\end{proof}

One implication of this proposition is that if $K\mon$ has a $\Pi$-respecting monomial order then the $\Pi$-divisibility quasi-order must be a partial order (i.e., it has the \textit{anti-symmetry} property: if $a \geq b$ and $a \leq b$ then $a = b$).  If anti-symmetry fails for $|_\Pi$, it will also fail for any refinement.

If $R$ is $\Pi$-Noetherian with a $\Pi$-respecting monomial order, then any $\Pi$-invariant ideal $I \subseteq R$ will have a finite $\Pi$-equivariant Gr\"obner basis.  This follows from the fact that $\LT I$ is $\Pi$-finitely generated.  We recount two previous results that give examples of $\Inc$-Noetherian rings, and they will be directly relevant to the results of this paper.

\begin{theorem}[Theorem 1.1 of \cite{hillar2012finite}]\label{thm:HS}
 Let $X = \{x_{ij} \mid i \in [k], j \in \B N\}$, and let $\Sym$ act on $[X]$ by permuting the second index: $\sigma x_{ij} = x_{i\sigma(j)}$ for $\sigma \in \Sym$.  Then, $K[X]$ is $\Inc$-Noetherian.
\end{theorem}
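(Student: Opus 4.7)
The plan is to reduce Noetherianity of $\Inc$-invariant ideals in $K[X]$ to a well-quasi-order statement about monomials, then lift from monomial ideals to arbitrary ideals via a $\Inc$-respecting monomial order.

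First I would encode monomials combinatorially. Every monomial in $K[X]$ can be identified with a finite word over the alphabet $A = \B N^k$: for each index $j$ that appears in the support, record the column vector $(a_{1j},\ldots,a_{kj})$ of exponents, and list these columns in the order of increasing $j$. Under this identification, the $\Inc$-divisibility relation $|_\Inc$ on $[X]$ becomes exactly the Higman embedding on $A^*$: $m_1 \,|_\Inc\, m_2$ iff there is a strictly increasing map from the support positions of $m_1$ into those of $m_2$ that takes each column of $m_1$ to a column of $m_2$ that coordinatewise dominates it. Verifying this bijection correctly is routine but is the technical heart of the argument.

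Next I would invoke the two classical well-quasi-ordering results. Dickson's lemma says $(\B N^k,\leq)$ is a wqo. Higman's lemma then says that $A^*$ with the embedding order is itself a wqo. In particular, every subset of $[X]$ closed under $|_\Inc$ has only finitely many minimal elements, so every $\Inc$-invariant monomial ideal of $K[X]$ is $\Inc$-finitely generated.

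Finally I would bootstrap from monomial ideals to arbitrary ideals using a $\Inc$-respecting monomial order $\leq$ on $K[X]$ (lexicographic with $x_{ij} < x_{i'j'}$ whenever $j<j'$, or whenever $j=j'$ and $i<i'$, works). For any $\Inc$-invariant ideal $I$, the initial ideal $\LT I$ is $\Inc$-invariant and monomial, hence $\Inc$-finitely generated by $\LT f_1,\ldots,\LT f_r$ for some $f_i\in I$. A standard equivariant Buchberger-style reduction then shows $\ideal{f_1,\ldots,f_r}_\Inc=I$: given $f\in I$, write $\LT f = m\cdot \rho\,\LT f_i$ for some $\rho\in\Inc$, $m\in[X]$, $i\leq r$, subtract the appropriate multiple of $\rho f_i$ to lower the leading monomial, and iterate. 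The well-ordering property of $\leq$ guarantees termination.

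The main obstacle is the first paragraph: one must verify carefully that the Higman embedding on $A^*$ reproduces $\Inc$-divisibility without spurious reorderings (this uses that $\Inc$ acts by strictly increasing maps, so relative order of supports is preserved). Everything after that is a combination of Dickson, Higman, and the classical lifting lemma.
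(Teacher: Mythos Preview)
The paper does not supply a proof of this theorem; it is quoted as Theorem~1.1 of \cite{hillar2012finite} and used as a black box. Your proposal is correct and is precisely the classical argument given in that reference (and, in essence, already in \cite{cohen1967laws} and \cite{aschenbrenner2007finite}): encode each monomial as a finite word over $\B N^k$ by listing its nonzero exponent columns in increasing $j$, observe that $\Inc$-divisibility on $[X]$ coincides with the Higman subword order on such words, conclude from Dickson's lemma plus Higman's lemma that $|_{\Inc}$ is a well-quasi-order, and then lift finite generation from $\Inc$-invariant monomial ideals to arbitrary $\Inc$-invariant ideals via an $\Inc$-respecting monomial order and the standard leading-term reduction.

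One small wording issue: your ``identification'' of monomials with words is not a bijection (e.g.\ $x_{1,1}$ and $x_{1,2}$ yield the same word), but this is harmless---what the argument actually needs, and what you verify, is that $m_1\,|_{\Inc}\,m_2$ holds if and only if the word of $m_1$ Higman-embeds into the word of $m_2$. With that in hand the rest is routine.
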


\begin{theorem}[Theorem 1.1 of \cite{draisma2013noetherianity}]\label{thm:DEKL}
 Let $K[Y]$ be a $\Sym$-algebra with $\Sym$ action on variable set $Y$.  Suppose $Y$ has a finite number of $\Sym$-orbits, and $K[Y]$ satisfies the finite width condition.  For $K[X]$ defined as in Theorem \ref{thm:HS}, let $\phi$ be a monomial map:
  \[ \phi: K[Y] \to K[X]. \]
 Then, the following hold:
 \begin{itemize}
  \item $\ker \phi$ is $\Inc$-finitely generated,
  \item $\im \phi$ is $\Inc$-Noetherian.
 \end{itemize}
\end{theorem}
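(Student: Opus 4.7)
The plan is to deduce both statements from the $\Inc$-Noetherianity of $K[X]$ (Theorem~\ref{thm:HS}), exploiting that, as a monomial map, $\phi$ sends each variable in $Y$ to a single monomial in $[X]$, so $\im \phi$ is $K$-spanned by the $\Inc$-invariant submonoid $\phi([Y]) \subseteq [X]$.

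First I would show that $\im \phi$ is $\Inc$-Noetherian. Given an ascending chain $I_1 \subseteq I_2 \subseteq \cdots$ of $\Inc$-invariant ideals in $\im \phi$, form the extensions $\tilde{I}_k := I_k \cdot K[X]$, which are $\Inc$-invariant ideals of $K[X]$. By Theorem~\ref{thm:HS} the chain $\{\tilde{I}_k\}$ stabilizes, so it suffices to establish the contraction identity $\tilde{I}_k \cap \im \phi = I_k$. For this I would fix an $\Inc$-respecting monomial order on $K[X]$ and perform a Gr\"obner-style reduction on an arbitrary element of $\tilde{I}_k \cap \im \phi$, arranging the reduction steps to stay within $\im \phi$ by using that $\im \phi$ is $K$-spanned by the submonoid $\phi([Y])$.

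Next, for $\ker \phi$: since $\phi$ is monomial, $\ker \phi$ is a binomial ideal, $K$-spanned by the differences $\{m - m' : m, m' \in [Y],\ \phi(m) = \phi(m')\}$. After fixing an $\Inc$-respecting monomial order on $K[Y]$ (which exists thanks to the finite-width and finite-orbit hypotheses), the leading monomials of these binomials form a set $B \subseteq [Y]$ that is $\Inc$-invariant and closed under multiplication by $[Y]$, so it determines a $\Inc$-invariant monomial ideal $\initial(\ker \phi)$. The task reduces to showing $B$ is finitely generated under $\Inc$ and $[Y]$-multiplication. I would argue this using the $\Inc$-well-quasi-ordering of $[X]$ (the monoid-theoretic content of Theorem~\ref{thm:HS}) combined with the map $\phi: [Y] \to [X]$: each element of $B$ is characterized by $\phi(m) = \phi(m')$ for some $m' < m$, so one partitions $B$ along $\phi$ and applies Dickson-style arguments on the fibers, together with the $\Inc$-Noetherianity of $\im \phi$ from the first part to control the monoid $\phi([Y])$.

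The main obstacle I expect in both parts is that the submonoid $\phi([Y])$ need not be saturated in $[X]$: given $u \in \phi([Y])$ and $v \in [X]$ with $uv \in \phi([Y])$, one need not have $v \in \phi([Y])$. This causes contractions of ideals from $K[X]$ to $\im \phi$, divisions of monomials inside $\phi([Y])$, and reductions staying within $\im \phi$ to all require careful control. Handling this lack of saturation is the technical heart of the argument, and typically requires a monomial order adapted to $\phi$ together with combinatorial bookkeeping on fibers of $\phi$.
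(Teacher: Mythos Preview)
The paper does not prove this theorem at all: it is quoted verbatim as Theorem~1.1 of \cite{draisma2013noetherianity} and used as a black box. So there is no ``paper's own proof'' to compare your proposal against.

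That said, your sketch has a genuine gap in the argument for $\Inc$-Noetherianity of $\im\phi$. The extension--contraction identity $\tilde{I}_k \cap \im\phi = I_k$ that you need is simply false for general monomial subalgebras, and the failure is exactly the non-saturation you flag. A toy model: inside $K[x]$ take the monomial subalgebra $S = K[x^2,x^3]$ and the ideal $I = (x^3) \subseteq S$. Then $I\cdot K[x] = (x^3)$ contains $x^4$, and $x^4 \in S$, but $x^4 \notin I$ as an $S$-ideal because $x \notin S$. So $I \subsetneq (I\cdot K[x]) \cap S$. Nothing in your proposed ``Gr\"obner-style reduction staying in $\im\phi$'' prevents this phenomenon; in fact, the reduction you describe would compute a normal form of $f \in \tilde{I}_k \cap \im\phi$ modulo $I_k$ inside $\im\phi$, and you have no mechanism to force that normal form to vanish. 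Acknowledging that this is ``the technical heart'' is accurate, but you have not supplied the heart. The actual proof in \cite{draisma2013noetherianity} does not proceed by extension--contraction; it uses a substantially more delicate argument.

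Your plan for $\ker\phi$ is also too thin. You assert that an $\Inc$-respecting monomial order on $K[Y]$ exists ``thanks to the finite-width and finite-orbit hypotheses,'' but those hypotheses alone do not obviously produce one; and even granting the order, the step ``partition $B$ along $\phi$ and apply Dickson-style arguments on the fibers'' hides the real difficulty. The fibers of $\phi$ on $[Y]$ can be infinite and are not themselves well-quasi-ordered by $\Inc$-divisibility in any evident way (recall the paper notes $K[Y]$ is generally not $\Inc$-Noetherian), so a Dickson argument on fibers does not go through without a new idea.
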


The conditions on the ring $K[Y]$ in Theorem \ref{thm:DEKL} are quite general although \cite{hillar2012finite} proves that such rings are generally not $\Sym$-Noetherian.  They give the example of $K[Y]$ where $Y = \{y_{ij} \mid i, j \in \B N\}$ with $\sigma y_{ij} = y_{\sigma(i)\sigma(j)}$ for $\sigma \in \Sym$ and prove that Noetherianity fails.

When $R$ is not $\Pi$-Noetherian, we do not know in general if a $\Pi$-finitely generated ideal $I\subseteq R$ has a finite $\Pi$-equivariant Gr\"obner basis, or if so, for which monomial orders.  However,~\cite{Krone:egb-toric} shows that the $\Sym$-invariant toric ideal $\ker \phi$ as in Theorem \ref{thm:DEKL} does have finite $\Inc$-equivariant Gr\"obner bases for specifically chosen monomial orders.  This allows for an algorithm to compute a Gr\"obner basis of $\ker \phi$ given $\phi$.

\section{Equivariant Buchberger algorithm}\label{EGB}

\subsection{Description of the algorithm}
First proposed in \cite{aschenbrenner2007finite} and formalized in \cite{Brouwer09e}, the classical Buchberger's algorithm~\cite{buchberger1965algorithmus} may be adapted to the equivariant setting in a straightforward way.

Let $R = K\mon$ with $\Pi$ acting on $\mon$, and let $\leq$ be a $\Pi$-respecting monomial order.  For $f,g \in R$, we say that $g$ {\em $\Pi$-reduces} $f$ if $\LT g |_{\Pi} \LT f$ and the reduction is $f - \frac{\LC(f)}{\LC(g)}mg$ where $m \in \mon *\Pi$ is such that $\LT f = m\LT g$ (and $\LC(f)$ denotes the lead coefficient of $f$).  For $G \subseteq R$, a {\em $\Pi$-normal form} of $f$ with respect to $G$, denoted $\NF_{\Pi G}(f)$, is the result of repeated $\Pi$-reductions of $f$ by elements of $G$ until no more reductions are possible.  Equivalently, $\NF_{\Pi G}(f)$ is a normal form of $f$ with respect to $\Pi G$.

The equivariant Buchberger's algorithm is described below, which departs from the conventional Buchberger's algorithm only at the step of adding new S-pairs to the list $S$.  The necessity and extent of this departure becomes clear with the definition of $O_{f,g}$ and the {\em finite S-pair condition} (Definition~\ref{def:finite-s-pair}) that are given after the description of the algorithm.

\begin{algorithm}[Brouwer--Draisma \cite{Brouwer09e}]\label{alg:Buchberger}
$G = \alg{Buchberger}(F)$
\begin{algorithmic}[1]
\REQUIRE $F$ is a finite set of elements in $R = K\mon$ with $\Pi$ acting on $\mon$ and satisfying the finite S-pair condition.
\ENSURE $G$ is $\Pi$-equivariant Gr\"obner basis of $\ideal{F}_{\Pi}$.

\smallskip \hrule \smallskip

\STATE $G\gets F$
\STATE $S\gets \bigcup_{f,g\in G} O_{f,g}$
\WHILE{$S\neq\emptyset$}
	\STATE pick $(h_1,h_2) \in S$
	\STATE $S\gets S\setminus\{(h_1,h_2)\}$ 
	\STATE $h \gets \NF_{\Pi G}(h_1 - \frac{\LC(h_1)}{\LC(h_2)}h_2)$
  	\IF{$h \neq 0$}
		\STATE $G\gets G\cup \{h\}$
		\STATE $S\gets S\cup \left(\bigcup_{g\in G}O_{g,h}\right)$
	\ENDIF
\ENDWHILE
\smallskip \hrule \smallskip
\end{algorithmic}
\end{algorithm}

Given $f,g \in R$ define:
 \[ \C S_{f,g} := \{(m_1f,m_2g) \mid m_1,m_2 \in \mon * \Pi \text{ such that } \LT m_1f = \LT m_2g\}. \]
This collection is closed under the diagonal action of $\mon *\Pi$, making $\C S_{f,g}$ a $\mon *\Pi$-module.  

\begin{definition}\label{def:buchberger-criterion}
A set $G \subseteq R$ satisfies the {\em equivariant Buchberger criterion} if for all $(h_1,h_2) \in \bigcup_{f,g\in G} \C S_{f,g}$:
 \[ \NF_{\Pi G}(h_1 - \tfrac{\LC(h_1)}{\LC(h_2)}h_2) = 0. \]
\end{definition}

The set $G$ is a $\Pi$-equivariant Gr\"obner basis of $\ideal{G}_{\Pi}$ if and only if it satisfies the equivariant Buchberger criterion.  The proof of this fact follows by applying the usual Buchberger criterion to the set $\Pi G$ (see Theorem 2.5 of \cite{Brouwer09e}).

For each pair $f,g \in G$, we need not check the criterion on every pair in the infinite set $\C S_{f,g}$.  It is instead sufficient to check on a $\mon * \Pi$ generating set of $\C S_{f,g}$, which we denote $O_{f,g}$.  Still, in general, it may be that no finite generating set of $\C S_{f,g}$ exists, in which case we cannot apply the algorithm in finite time.

\begin{definition}\label{def:finite-s-pair}
 A $\Pi$-algebra $R = K\mon$ has the {\em finite S-pair condition} if for any $f,g \in R$, the set $\C S_{f,g}$ is finitely generated as a $\mon * \Pi$-module.  In \cite{Brouwer09e}, this condition is referred to as ``EGB4.''
\end{definition}

When $\Pi$ is trivial and $R$ is a polynomial ring (the setting of the conventional Buchberger's algorithm), $\C S_{f,g}$ is generated by a single pair $(m_1 f,\; m_2 g)$ where:
$$
m_1 = \lcm(\LT f, \LT g)/\LT(f),\quad
m_2 = \lcm(\LT f, \LT g)/\LT(g)
\,.
$$  
This generator is typically referred to as the {\em S-pair} of $f,g$.  Therefore, $R$ in this case satisfies the finite S-pair condition, and the equivariant Buchberger's algorithm specializes to the conventional Buchberger's algorithm.

\begin{proposition}
 If $R$ is a polynomial ring $R = K[Y]$ with $\Inc$-action on $[Y]$ satisfying the finite width condition, then $R$ has the finite S-pair condition.
\end{proposition}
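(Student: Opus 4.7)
The plan is to reduce $\C S_{f,g}$ to finitely many orbits under the diagonal $\Inc$ action by exploiting the finite widths of $f$ and $g$, then produce one $\lcm$-based generator per orbit. Set $n_f = w(f)$ and $n_g = w(g)$; both are finite by hypothesis. Since $\sigma f$ depends only on $\sigma|_{[n_f]}$ for $\sigma \in \Inc$ (and analogously for $g$), any pair $(m_1 \sigma_1 f,\, m_2 \sigma_2 g) \in \C S_{f,g}$ is determined by the strictly increasing injections $\sigma_1|_{[n_f]}$ and $\sigma_2|_{[n_g]}$ together with monomials $m_1, m_2$ satisfying $m_1 \sigma_1 \LT(f) = m_2 \sigma_2 \LT(g)$.

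The normalization step is as follows. Let $U = \sigma_1([n_f]) \cup \sigma_2([n_g])$, so $k := |U| \le n_f + n_g$. Enumerate $U = \{u_1 < \cdots < u_k\}$ and define $\rho \in \Inc$ by $\rho(j) = u_j$ for $j \le k$, extended strictly increasingly beyond. Then there are unique order-preserving injections $\tau_i|_{[n_i]}\colon [n_i] \hookrightarrow [k]$ with $\rho \tau_i|_{[n_i]} = \sigma_i|_{[n_i]}$; extend each $\tau_i$ to an element of $\Inc$ arbitrarily (harmless because $\tau_i f$ and $\tau_i g$ do not see values beyond $[n_i]$). By construction $\sigma_i f = \rho \tau_i f$ and $\sigma_i g = \rho \tau_i g$, and the normalized data $(\tau_1|_{[n_f]}, \tau_2|_{[n_g]})$ ranges over a finite set, bounded by the number of pairs of order-preserving injections from $[n_f]$ and $[n_g]$ into $[n_f + n_g]$ whose joint image is an initial segment.

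For each normalized pair $(\tau_1, \tau_2)$, the $\lcm$ $L = \lcm(\tau_1 \LT f, \tau_2 \LT g)$ in the free commutative monoid $[Y]$ is a unique monomial, and determines monomials $\mu_1, \mu_2$ with $\mu_1 \tau_1 \LT(f) = \mu_2 \tau_2 \LT(g) = L$. Take $O_{f,g}$ to be the finite set $\{(\mu_1 \tau_1 f,\, \mu_2 \tau_2 g)\}$ indexed by normalized pairs. To verify generation, consider an arbitrary $(m_1 \sigma_1 f,\, m_2 \sigma_2 g) \in \C S_{f,g}$ with normalization $\sigma_i = \rho \tau_i$. Because $\rho$ permutes variables and $\leq$ is $\Inc$-respecting, $\rho$ commutes with both $\LT$ and $\lcm$; hence $\lcm(\sigma_1 \LT f, \sigma_2 \LT g) = \rho L$, and the common monomial $m_1 \sigma_1 \LT f$ equals $n \cdot \rho L$ for some $n \in \mon$. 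One then identifies $m_i = n \cdot \rho(\mu_i)$ from $m_i \rho \tau_i \LT(\cdot) = n \rho L = n \rho(\mu_i) \rho \tau_i \LT(\cdot)$, giving
\[ (m_1 \sigma_1 f,\, m_2 \sigma_2 g) = (n, \rho) \cdot (\mu_1 \tau_1 f,\, \mu_2 \tau_2 g). \]

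The main (modest) obstacle is bookkeeping: confirming that $\rho$ commutes with $\LT$ (from the $\Inc$-respecting property) and with $\lcm$ (from the variable-permutation action), that the extension of $\tau_i$ beyond $[n_i]$ does not affect $\tau_i f$ or $\tau_i g$, and that the count of normalized patterns is manifestly finite. None of these is deep, and together they establish that the finite set $O_{f,g}$ generates $\C S_{f,g}$ as a $\mon * \Inc$-module.
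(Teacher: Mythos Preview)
Your proof is correct and follows essentially the same approach as the paper: both arguments factor an arbitrary pair $(\sigma_1,\sigma_2)$ through a common $\rho\in\Inc$ and a normalized ``interlacing'' pair $(\tau_1,\tau_2)$ with images in $[w(f)+w(g)]$, then take the ordinary $\lcm$-based S-pair for each of the finitely many interlacings as the generating set. Your version is somewhat more explicit about the bookkeeping (verifying that $\rho$ commutes with $\LT$ and $\lcm$ and exhibiting the element $(n,\rho)\in\mon*\Inc$ realizing the generation), and your normalization---requiring the joint image of $\tau_1,\tau_2$ to be an initial segment $[k]$---is slightly tighter than the paper's, but these are refinements of the same idea rather than a different route.
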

\begin{proof}
 Fix $f,g \in R$.
 Since $R$ is a polynomial ring, for fixed $\sigma_1,\sigma_2 \in \Inc$, all elements of $\C S_{f,g}$ of the form $(m_1\sigma_1 f, m_2\sigma_2 g)$ with $m_1,m_2 \in \mon$ are monomial multiplies of the usual S-pair of $\sigma_1 f, \sigma_2 g$:
  \[ \left(\frac{m}{\LT \sigma_1 f} \sigma_1 f,\; \frac{m}{\LT \sigma_2 g} \sigma_2 g\right),\]
 where $m = \lcm(\LT \sigma_1 f,\LT \sigma_2 g)$.

 Any $f,g \in R$ have finite width so that $\sigma_1 f$ depends only on $\sigma_1|_{[w(f)]}$, and similarly for $\sigma_2 g$.  In fact, we can always factor the pair as:
  \[ (\sigma_1 f, \sigma_2 g) = \rho(\sigma'_1 f, \sigma'_2 g),\]
 for some $\rho \in \Inc$, while $\sigma'_1:[w(f)] \to [w(f) + w(g)]$ and $\sigma'_2:[w(g)] \to [w(f) + w(g)]$ are strictly increasing functions.  Here $\sigma'_1$ and $\sigma'_2$ are chosen to ``interlace'' the variables of $f$ and $g$ in the same way as $\sigma_1,\sigma_2$.  (To consider $\sigma'_1,\sigma'_2$ as elements of $\Inc$, take any choice of extensions to maps on $\B N$.)
 
 Then, $\C S_{f,g}$ is generated by the finite set of pairs of the form:
  \[ \left(\frac{m}{\LT \sigma'_1 f} \sigma'_1 f,\; \frac{m}{\LT \sigma'_2 g} \sigma'_2 g\right),\]
 with $\sigma'_1:[w(f)] \to [w(f) + w(g)]$ and $\sigma'_2:[w(g)] \to [w(f) + w(g)]$ where $m = \lcm(\LT \sigma'_1 f,\LT \sigma'_2 g)$.
\end{proof}
\begin{figure}[ht]\label{fig:interlace}
  \centering
  \includegraphics[width=.9\columnwidth]{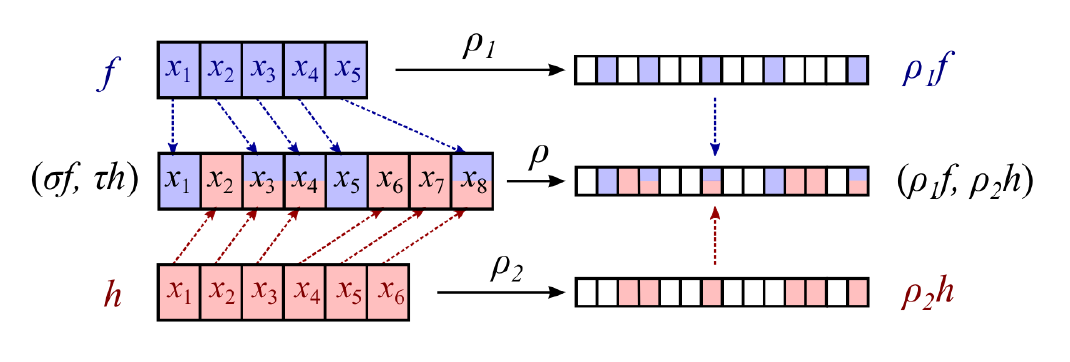}
  \caption{For $f$ and $g$ of width 5 and 6 respectively, any S-pair, $(\rho_1 f,\rho_2 g)$, is in the orbit of some S-pair obtained from an ``interlacing'' of $[5]$ and $[6]$, $(\sigma f,\tau g)$.}
\end{figure}

We note that Algorithm \ref{alg:Buchberger} is guaranteed to terminate when $R$ is $\Pi$-Noetherian.  Let $G_0,G_1,\ldots$ be the value of $G$ at each step.  The initial ideals of these sets form a strictly increasing chain of $\Pi$-invariant monomial ideals:
 \[ \ideal{\LT G_0}_\Pi \subsetneq \ideal{\LT G_1}_\Pi \subsetneq \cdots,\]
which must terminate.  However, without Noetherianity, we offer no termination guarantee of the algorithm as stated above, even when a finite equivariant Gr\"obner basis for the ideal exists.  Algorithm~\ref{alg:truncBuch} is a modification of the algorithm which repairs this when $\Pi = \Inc$, a finite equivariant Gr\"obner basis exists, and the truncated rings $R_n$ are Noetherian.

\subsection{Termination of $\Inc$-equivariant Buchberger}
Let $R = K\mon$ with $\Inc$ action on $\mon$, with $R$ satisfying the finite width and finite S-pair conditions, and with each truncation $R_n$ a Noetherian ring.  Let $I \subseteq R$ be a $\Inc$-invariant ideal which is $\Inc$-generated by finite set $F$, and, moreover, has finite $\Inc$-equivariant Gr\"obner basis $G$.  Define the {\em generator truncation} of $I$ to be $\tilde{I}_{F,n} := \ideal{\Inc F \cap R_n} \cap R_n$.  Note that $\tilde{I}_{F,n} \subseteq I_n$, but, in general, equality does not hold.  For $f \in I$, define $w_F(f)$ to be the minimum value of $n$ for which $f \in \tilde{I}_{F,n}$.

The truncated EGB algorithm takes a finite generating set $F$ as its input.  For each successive $n \geq w(F)$, it computes a set $G_n$ such that $\Inc G_n \cap R_n$ is a Gr\"obner basis for $\tilde{I}_{F,n}$.  Then it checks if $G_n$ is a $\Inc$-equivariant Gr\"obner basis of $I$ using the equivariant Buchberger criterion (Definition~\ref{def:buchberger-criterion}), and if so returns $G_n$.

\begin{algorithm}\label{alg:truncBuch}
$G = \alg{TruncatedEGB}(F)$
\begin{algorithmic}[1]
\REQUIRE $F$ is a finite set of elements in $R = K\mon$ with $\Inc$ acting on $\mon$, $R$ satisfies the finite width and finite S-pair conditions, and each $R_n$ is Noetherian.
\ENSURE $G$ is a $\Inc$-equivariant Gr\"obner basis of $I := \ideal{F}_{\Inc}$.

\smallskip \hrule \smallskip

\STATE $G\gets F$
\STATE $n\gets w(F)$
\WHILE{$G$ not a $\Inc$-equivariant Gr\"obner basis of $I$}
	\STATE $G\gets$ Gr\"obner basis of $\tilde{I}_{F,n}$
	\STATE $n \gets n+1$
\ENDWHILE
\smallskip \hrule \smallskip
\end{algorithmic}
\end{algorithm}

\begin{proof}[Proof of termination (supposing a finite EGB for $I$)]
For each $n$, let $G_n$ denote the value of $G$ after that step.  Computing $G_n$ is a finite process since it takes place in $R_n$, which is Noetherian.  $G_n$ is a finite set, and so it has a finite number of S-pairs to be checked.  Therefore, testing whether $G_n$ is a $\Inc$-equivariant Gr\"obner basis is finite.

It remains to be proved that $G_n$ is a $\Inc$-equivariant Gr\"obner basis for some value of $n$.  If $H$ is a $\Inc$-equivariant Gr\"obner basis of $I$, for any $h \in H$ we have $h \in \tilde{I}_{F,n}$ for all $n \geq w_F(h)$, so $\LT(h) \in \LT(\tilde{I}_{F,n})$.  Thus, there is some $g \in G_n$ with $\LT(g)|_{\Inc} \LT(h)$.  For $n = \max_{h\in H} w_F(h)$, the initial ideal $\ideal{\LT(G_n)}_{\Inc}$ then necessarily contains $\ideal{\LT(H)}_{\Inc}$, and so $G_n$ is a $\Inc$-equivariant Gr\"obner basis of $I$.
\end{proof}

In practice, $G_n$ can be computed either using a traditional Gr\"obner basis algorithm on input $\Inc F \cap R_n$ or using an equivariant Buchberger's algorithm on input $F$ with the following two caveats:
\begin{itemize}
 \item Consider only S-pairs $(m_1f, m_2g)$ with $m_1f$ and $m_2g$ both having width $\leq n$,
 \item Perform only reductions such that the outcome has width $\leq n$.
\end{itemize}
Moreover, we do not need to restart the algorithm from scratch for each $n$: $G_{n-1} \cup F$ can be used as the input for the $n$th step instead of $F$.

Suppose $R$ has the form $K[Y]$ and each $R_n = K[Y_n]$ for some $Y_n \subseteq Y$.  If $\leq$ is a width order (a monomial order such that $w(a) < w(b)$ implies $a < b$), the second condition is satisfied automatically since reductions cannot increase the width.  Therefore, the normal form of a given S-pair does not depend on $n$ and only needs to be computed once.  As a result, we can use Algorithm \ref{alg:Buchberger}, queuing S-pairs by width so that the smallest width S-pairs are considered first.  The algorithm terminates once the queue is empty.  A separate check for whether $G_n$ is a $\Inc$-equivariant Gr\"obner basis for $I$ is not needed since this is equivalent to reducing all S-pairs in the queue.

\subsection{Macaulay2 package}

We have implemented several strategies for computing equivariant Gr\"obner bases in a package: \begin{center}
\verb|EquivariantGB|~(see {\tt http://rckr.one/EquivariantGB.html}) 
\end{center}
for Macaulay2~\cite{grayson2002macaulay}, a software system for computational algebraic geometry and commutative algebra.

The main command of our Macaulay2 package, \verb|egb|, has an optional argument that determines how the computation is done:
\begin{itemize}
\item 
\verb|egb(...,Algorithm=>Buchberger)| uses Algorithm~\ref{alg:Buchberger},
\item 
\verb|egb(...,Algorithm=>Incremental)| uses Algorithm~\ref{alg:truncBuch},
\item 
\verb|egb(...,Algorithm=>Signature)| uses the approach in~\S\ref{sec:signature}.
\end{itemize}

\begin{remark}\label{rem:4ti2}
With the assumptions of Theorem~\ref{thm:DEKL}, one can operate  with truncated toric ideals and use specialized lattice based Gr\"obner bases algorithms to improve performance. 
Our Macaulay2 command for that, \verb|egbToric|, outsources heavy computation to \verb|4ti2|~(see \cite{4ti2}),  a special software package for algebraic, geometric, and combinatorial problems on linear spaces.   
\end{remark}

\section{A signature-based approach}\label{sec:signature}

In this section, we describe an approach to computing equivariant \GBs\ that utilizes the information stored in {\em signatures}. 

Signature-based algorithms for computing \GBs\ in the most common (finite-dimensional, commutative) setting acquired popularity due to Faugere's F5 (see a short description in \S 4 of Chapter 10 of the new edition of Cox, Little, and O'Shea~\cite{Cox-Little-OShea:I-V-A}).      
We give a description of one of the signature-based approaches due to Gao {\em et al.} in \cite{Gao-Volny-Wang:signature-GBs}, followed by its modification needed to compute equivariant \GBs.  

\subsection{Strong \GB}

Let $I=\ideal{F} \subset R=K[x_1,\ldots,x_n]$, where $|F|=r\in\mathbb N$.

A subset $G$ of:
\[
P = \{(s,f)\in R^r\times R \mid f=s\cdot F = \sum_{i=1}^r s_iF_i\}
\]
is called a {\em strong \GB } if every nonzero pair is {\em top-reducible} by some pair in $G$.

A pair $(s_f,f)$ is {\em top-reducible} by $(s_g,g)$ if $\LM g |\LM f$ and 
for some $a$ with $\LM f = a \LM g$, we have $a\LM s_g\leq \LM s_f$.  If the reduction:
\[
(s_{f'},f') := (s_f-as_g,f-ag)
\]
has $\LM s_{f'} = \LM s_f$, then it is {\em regular top-reducible}.

If $G$ is a strong \GB, then by Proposition 2.2 of~\cite{Gao-Volny-Wang:signature-GBs}:
\begin{enumerate}
   \item $\{f \mid (s,f)\in G\}$ is a \GB\ of $I$, and 
   \item $\{s \mid (s,0)\in G\}$ is a \GB\ of the module of syzygies $\Syz(F) \subset R^r$.
\end{enumerate}
   
Take two pairs $p_f=(s_f,f)$ and $p_g=(s_g,g)$. For monomials $a$ and $b$ such that $a\LM f = b\LM g \in \lcm(\LM f,\LM g)$, form a {\em J-pair} by taking the ``larger side'' of the corresponding S-polynomial; e.g., if $a\LM s_f \geq b\LM s_g$, then the J-pair is $(as_f,af)$.

We denote the set of all J-pairs of $p_f$ and $p_g$ as $J_{p_f,p_g}$. Note that $\lcm(\LM f,\LM g)$, the {\em set} of lowest common multiples, has one element in our current setting as does $J_{p_f,p_g}$.  

\begin{example} If we have:
\begin{align*}
p_f &= (e_1+\cdots,\,x_1^2x_2+\cdots), \ p_g = (x_2e_1+\cdots,\,x_1x_2^2+\cdots),
\end{align*}
then, since $x_2\LM s_f < x_1\LM s_g$, we have:
\[
J_{p_g,p_f} = \{x_1p_g\} = \{(x_1x_2e_1+\cdots, x_1^2x_2^2+\cdots)\}.
\]
\end{example}

A pair $(s_f,f)$ is {\em covered} by $(s_g,g)$ if $\LM g | \LM f$ and for some $a$ such that $\LM f = a \LM g$, we have $a\LM s_g < \LM s_f$. 

\begin{algorithm} \label{alg:StrongBuchberger} 
$\alg{StrongBuchberger}(F)$

\begin{algorithmic}[1]
\REQUIRE $F \subset R$.
\ENSURE $G \cup S$ is a strong \GB\ for $F$.
\smallskip \hrule \smallskip

\STATE $G\gets \emptyset$, $S\gets \emptyset$ 
\STATE $J\gets \{(e_i,F_i):i\in r=|F|\} \subset R^r\times R$ 
\WHILE{$J\neq\emptyset$}
	\STATE pick $p_f = (s_f,f) \in J$; $J\gets J\setminus\{p_f\}$
	\STATE $p_h=(s_h,h) \gets$ {\em regular top-reduction} of $(s_f,f)$ with respect to $G$
  	\IF{$h \neq 0$}
		\STATE $G\gets G\cup \{p_h\}$
		\STATE append to $J$ all J-pairs $\bigcup_{(p_g)\in G}J_{p_g,p_h}$ not {\em covered} by $G \cup S$ 
        \ELSE 
                \STATE $S\gets S\cup\{(s_h,0)\}$
	\ENDIF
\ENDWHILE
\smallskip \hrule \smallskip
\end{algorithmic}
\end{algorithm}

Proof of termination relies on Noetherianity of the free module $R^r$. 

\subsection{Translation to an equivariant setting}
Let us return to an infinite-dimensional polynomial ring $R=K[X]$ with some $\Pi$-action. As a running example, take $R=K[x_i,\, i\in\mathbb N]$ with a $\Pi$-compatible order, $\Pi=\Inc$.

To draw parallels with the approach of the previous section, we need to work with pairs:
\[
P = \{(s,f)\in (R*\Pi)^r\times R \mid f=s\cdot F\}.
\]
Recall that, for instance, in our running example, one can think of $\Pi=\IncT = [\tau_i|i\in\mathbb N]\subset \Inc$ in Remark~\ref{rem:IncT}, so: \[R*\Pi = K[X]*\Pi = K([X]*\Pi).\] 
The semidirect product $[X]*\Pi$ is a {\em non-Noetherian noncommutative} monoid where every element can be written in a \emph{left standard form}: 
$$
x_{i_1}\cdots x_{i_c} \cdot \tau_{j_1} \cdots \tau_{j_d},
$$
with $i_1 \leq \ldots \leq i_c$ and $j_1 \leq \ldots \leq j_d$. 

Since the $\Pi$-divisibility order on $[X]*\Pi$ is not a well-partial-order (indeed, $\tau_i$ are pairwise not comparable), the (left) free module $(R*\Pi)^r$, $r\in\mathbb N$, is not Noetherian.  
In the presence of $F$ (the vector of generators of the given ideal) and with a fixed order on $R$, we define the \emph{Schreyer order} on $(R*\Pi)^r$ as follows. For two terms $me_i$ and $m'e_j$, with $m,m'\in [X]*\Pi$:
\begin{itemize}
\item compare $m\LM f_i$ and $m'\LM f_j$ using the order on $R$,
\item then break the ties according to the position (i.e., compare $i$ and $j$).
\end{itemize}
While we see the Schreyer order as natural in some sense, any term order compatible with the order on $R$ may be used. 

A strong equivariant \GB, which can be defined similarly to a strong \GB\ in the previous section, is infinite (for a nonzero $\Pi$-invariant ideal). 
For instance, $I = R = K[x_1,x_2,\ldots]$ has a \GB\ $\{1\}$.
However, a strong \GB\ has to include the elements $\{(\tau_i-1)e_1 \mid i\in\mathbb N \} \subset (R*\Pi)^1$. 

We found a way to modify Algorithm~\ref{alg:StrongBuchberger} to compute an equivariant \GB.  It, of course, falls short of computing a strong equivariant \GB, but the partial information computed about the syzygies and the mechanism of top-reduction of J-pairs eliminate a large number of unnecessary iterations in a na\"ive implementation of an equivariant Buchberger's algorithm~(Algorithm \ref{alg:Buchberger}).

\begin{algorithm}\label{alg:egb-signature}
$\alg{EquivariantSignatureBuchberger}(F)$

\begin{algorithmic}[1]
\REQUIRE $F\subset R$ .
\ENSURE $G$ such that $\pi_2(G)$ is an equivariant \GB\ of $\ideal{F}_\Pi$.
\smallskip \hrule \smallskip
\STATE $r\gets |F|$.
\STATE $G\gets \emptyset$, $S\gets \emptyset$ 
\STATE $J\gets s\{(e_i,F_i):i\in r=|F|\} \subset R^r\times R$ 
\WHILE{$J\neq\emptyset$}
	\STATE pick $p_f = (s_f,f) \in J$; $J\gets J\setminus\{p_f\}$
	\STATE $p_h=(s_h,h) \gets$ {\em regular top-reduction} of $(s_f,f)$ with respect to $G$
  	\IF{$h \neq 0$}
		\STATE \hl{$h' \gets\NF_{\Pi\pi_2(G)} h$}
	        \IF{$h' \neq 0$}
			\IF{\hl{$h' \neq h$}}
				\STATE \hl{$r\gets r+1$}
				\STATE \hl{$p_h \gets (e_r,h')$}
			\ENDIF
			\STATE $G\gets G\cup \{p_h\}$
			\STATE append to $J$ all J-pairs $\bigcup_{(p_g)\in G}J_{p_g,p_h}$ not {\em covered} by $G \cup S$ 
		\ENDIF
        \ELSE 
                \STATE $S\gets S\cup\{(s_h,0)\}$
	\ENDIF
\ENDWHILE
\smallskip \hrule \smallskip
\end{algorithmic}
\end{algorithm}

The highlighted part of the algorithm ensures that it terminates for an input for which Algorithm \ref{alg:Buchberger} terminates.
Note that the rank $r$ (recall: $G$ and $S$ are contained in $(R*\Pi)^r\times R$) may grow as the algorithm progresses. 

\begin{example}
Consider the ideal $I = \ideal{F}_{\IncT}$ in the ring $R=K[x_i, y_{ij}\mid i,j\in\mathbb N, i>j]$ where $F = y_{21}-x_2x_1$.

The implementation of Algorithm~\ref{alg:egb-signature} produces the following output:
\begin{M2}
\begin{verbatim}
i1:  needsPackage "EquivariantGB";

i2 : -- QQ[x_0,x_1,...; y_(0,1),y(1,0),...] 
     -- (NOTE: indices start with 0, not 1)
     R = buildERing({symbol x, symbol y}, {1,2}, QQ, 2, 
                    MonomialOrder=>Lex, Degrees=>{1,2});

i3:  egbSignature(y_(1,0) - x_0*x_1)

...

...

-- 95th syzygy: (0, y_(6,0)*y_(4,3)*y_(2,1)*{2, 5, 6, 7, 8}*[0])

...

...

-- TOTAL covered pairs = 1528
                     
o3 = {- x x  + y   , ...  ...  ...
         1 0    1,0  

      - y   y    + y   y   , - y   y    + y   y   }
         3,2 1,0    3,1 2,0     3,1 2,0    3,0 2,1
\end{verbatim}
\end{M2}  

In particular, this computation shows that the kernel of the monomial map induced by $y_{ij}\mapsto x_ix_j$ is 
$\ideal{y_{43}y_{21}-y_{42}y_{31},y_{42}y_{31}-y_{41}y_{32}}_{\IncT}$.

The number of times a polynomial corresponding to a J-pair in the queue $J$ was reduced to zero is {\bf 95}. However, in this signature-based algorithm, the knowledge of 95 syzygies is still useful as their signatures are stored and may ``cover'' some J-pairs in the queue. The total number of covered J-pairs, 1528, could be taken as a measure of how many useless reductions are avoided.

There is an optional parameter: \begin{center} \verb|egbSignature(...,PrincipalSyzygies=>true)|,\end{center} that instructs the algorithm to construct the so-called \emph{principal syzygies}, the syzygies that correspond to the trivial commutation relations on the generators: $(\sigma F_i) (\sigma' F_j)-(\sigma'F_j)(\sigma F_i)=0$, $i\neq j$, where $\sigma,\sigma'\in\Pi$ are extensions of the maps $[w(F_i)] \to [w(F_i) + w(F_j)]$ and $ [w(F_j)] \to [w(F_i) + w(F_j)]$.
With this option, the previous computation produces a much larger number of syzygies, {\bf 1114}; however, there is no improvement obtained in terms of covered J-pairs, and the improvement in the number of J-pairs that need to be stored is insignificant. 

It is our understanding that in the usual setting (where the results of~\cite{Gao-Volny-Wang:signature-GBs} apply in their entirety), the introduction of principal syzygies leads to a significant speedup. While we can find examples where the effect of principal syzygies is nontrivial, it still seems to be negligible in the setting of this paper.
\end{example}

Our general conclusion at the moment of writing is that signature-based approaches are applicable for computing EGBs, however, the savings produced by eliminating unnecessary reductions are largely offset by the amount of J-pairs needed to be stored.
Perhaps with a more careful implementation of what we have proposed and some new ideas, one could overcome the bottlenecks of the required space complexity and the complexity of looking up  J-pairs. 

At the moment, implementations of algorithms that fall back onto highly optimized \GBs\ routines in the finite-dimensional setting (such as Algorithm~\ref{alg:truncBuch} and its variation in Remark~\ref{rem:4ti2}) seem to be the best practical choice. 

\section{Open questions and challenges}\label{sec:challenges}

In this final section, we raise several computational challenges and theoretical problems arising from equivariant Gr\"obner bases and asymptotic symbolic algebra.  Often these challenge problems can serve as benchmark tests for sharpening the methods of practitioners who are improving and implementing these new classes of algorithms.

\begin{problem}[Chains induced by a monomial]
Compute symbolically an EGB for the chain of toric ideals $I_n = \ker(y_{ij} \mapsto x_i^a x_j^b)$, $1 \leq i \neq j \leq n$, for small $a>b$ with $\gcd(a,b)=1$. (Compare to \cite{Hillar13, hillar2016corrigendum, KKL:equivariant-markov, draisma2013noetherianity, Krone:egb-toric}).
\end{problem}

The case $a=2,b=1$ is the only one explicitly computed (Theorem~\ref{monomthm}). A variant of this problem has the same statement apart from considering a smaller subset of indices: $1 \leq j < i \leq n$ (see \cite[Remark 6.3]{draisma2013noetherianity}) and also more indices:

\begin{problem}
Develop combinatorial methodology to understand kernels with more than two indices such as $\ker(y_{ijk} \mapsto x_i^3 x_j^2 x_k)$?  
\end{problem}

There are also some basic questions in the theory of EGBs that remain open.  For instance, it is not so well understood exactly which classes of ideals have finite generation, much less an equivariant Gr\"obner basis. 
While \cite{draisma2013noetherianity} gives a definitive answer for a large class of invariant toric ideals (i.e., the kernels of equivariant monomial maps), the following question is open.
\begin{question}[Kernel of a polynomial map]
Is there a finite set of generators (up to symmetry) for the chain $I_n = \ker(y_{ij} \mapsto f(x_i,x_j))$, $1 \leq i \neq j \leq n$, for a given polynomial $f \in \mathbb C[s,t]$?  
\end{question}

Even when finite generation is known, other problems still remain open.  
 We know that the kernels of monomial maps stabilize \cite{aschenbrenner2007finite, KKL:equivariant-markov, draisma2013noetherianity} and have EGBs with respect to certain monomial orders~\cite{Krone:egb-toric}. What if the monomial order is not particularly nice? What if the map is a general equivariant polynomial map?

\begin{question}
If the answer to the previous question is positive, is there a finite \EGB\ with respect to an arbitrary order?
\end{question}

One largely unexplored aspect of research efforts to date is the structure of term orders for equivariant Gr\"obner bases.  In the classical application of Gr\"obner bases, term orders play a significant role and such concepts as the \emph{Gr\"obner fan} and techniques such as the \emph{Gr\"obner walk} arise. These seem not to have equivalents in the equivariant setting in view of the following question.  

\begin{question} For $R = \mathbb C[x_0,x_1,x_2,\ldots]$, there are several natural monomial orders respecting $\Inc$-action and refining the $\Inc$-divisibility partial order: namely, lexicographic and graded lexicographic orders.  Are there any others?
\end{question}

In classical computational algebra, Gr\"obner bases do more than simply answer ideal membership questions.  They also are used as input by other algorithms to find invariants describing the underlying geometry and algebra such as dimension, degree, Hilbert series, etc.  

\begin{question}
What is a good notion of the variety defined by an $\SymN$-invariant ideal (of an infinite-dimensional ring)? How should one define its dimension? 
\end{question}

\begin{question}
Is there a better (alternative) notion of Hilbert series, one that would be suitable for $\Inc$-invariant modules? 
(See issues discussed in the last section of~\cite{krone2016hilbert}.) 
\end{question}



While it is easily observed that, in practice, computations of EGBs tend to consume far more resources than in the classical case (per bit of input), there is no good understanding of theoretical complexity of an equivariant Buchberger's algorithm.

\begin{question}
Given widths and degrees of a finite set of generators, is there an upper bound on widths and degrees of the elements of a reduced EGB?

If so, then one could look for lower bounds (in the worst case). 
\end{question}






One of the largest computations done so far is that of~\cite{Brouwer09e}; it is accomplished by a custom made program (not available publicly). The output gives a definitive algebraic-statistical description of the Gaussian two-factor model by means of EGBs.  We propose the following difficult challenge.

\begin{problem}
Use EGBs to study the Gaussian three-factor model; i.e., obtain the kernel of the map:
\begin{align*} 
\mathbb C[y_{ij} \mid i,j \in \mathbb N, i > j] &\to \mathbb C[s_i,t_i,u_i \mid i \in \mathbb N],\\ 
y_{ij} &\mapsto s_is_j + t_it_j + u_iu_j\,.
\end{align*}
\end{problem} 

While this may be set up exactly with the same technique as in~\cite{Brouwer09e}, the computation seems to present an insurmountable task for the current implementations of current algorithms executed on current computers.

 


\bibliographystyle{plain}
\bibliography{egb}

\end{document}